\numberwithin{equation}{section}
\newcommand{\C}{{\mathbb C}}
\newcommand{\Z}{{\mathbb Z}}
\newcommand{\dcal}{{\mathcal D}}
\newtheorem{theo}{{\sc \bf Theorem}}[section]
\newtheorem{lem}[theo]{{\sc \bf Lemma}}
\newtheorem{prop}[theo]{{\sc \bf Proposition}}
\begin{document}

\title{Implementations of Derivations on the Quantum Annulus}

\author[Klimek]{Slawomir Klimek}
\address{Department of Mathematical Sciences,
Indiana University-Purdue University Indianapolis,
402 N. Blackford St., Indianapolis, IN 46202, U.S.A.}
\email{sklimek@math.iupui.edu}

\author[McBride]{Matt McBride}
\address{Department of Mathematics and Statistics,
Mississippi State University,
175 President's Cir., Mississippi State, MS 39762, U.S.A.}
\email{mmcbride@math.msstate.edu}

\author[Sakai]{Kaoru Sakai}
\address{Department of Mathematical Sciences,
Indiana University-Purdue University Indianapolis,
402 N. Blackford St., Indianapolis, IN 46202, U.S.A.}
\email{ksakai@iupui.edu}

\date{\today}

\begin{abstract}
We construct compact parametrix implementations of covariant derivations on the quantum annulus. 
\end{abstract}

\maketitle

\section{Introduction}
The goal of this paper is to provide simple examples of Dirac type operators on noncommutative compact manifolds.
We study analogs of d-bar operators on the quantum annulus using only inherent geometrical structures: rotations, invariant states, covariant derivations and their implementations. In our previous paper \cite{KM}, we constructed similar d-bar type operators on the quantum annulus using APS-type boundary conditions. The class of operators was designed to mimic the classical Atiyah-Patodi-Singer theory and is different, less geometrical than the class studied in this paper. The main outcome, as in the past paper, is that we show that our quantum d-bar type operators have compact parametrices, like elliptic differential operators on compact manifolds.

Our another paper \cite{KMR} on quantum annulus, following \cite{KMRSW}, contains a description of unbounded derivations, covariant with respect to a natural rotation, and their implementations in Hilbert spaces obtained from the GNS construction with respect to invariant states. It turned out that no such implementation in any GNS Hilbert space for a faithful, normal, invariant state has compact parametrices for a large class of boundary conditions. However, as demonstrated in \cite{KMP}, if we relax the concept of an implementation by allowing operators to act between different Hilbert spaces, then there is an interesting class of examples of quantum d-bar operators with compact parametrices that can be constructed this way for the case of the quantum disk.  It is the purpose of this paper to extend those ideas to the quantum annulus.

Spectral triples are a key tool in noncommutative geometry \cite{Connes}, as they allow using analytical methods in studying quantum spaces. Since compact parametrix property is a part of the spectral triples conditions, our papers \cite{KMRSW} and \cite{KMR} demonstrate, using analytic techniques, that spectral triples, in general, cannot be constructed on the quantum disk and the quantum annulus using implementations of covariant derivations in GNS Hilbert spaces. Another, topological reason was pointed out in \cite{KMP2}, in the case of the quantum disk $\mathcal{T}$. Namely, the pull-back map in K-Homology $K^0(C(S^1))\to K^0(\mathcal{T})$ is an isomorphism and so the restriction map $K^0(\mathcal{T})\to K^0(\mathcal{K})$ is a zero map. Consequently, any spectral triple over the Toeplitz algebra, when restricted to the ideal of compact operators $\mathcal{K}$ should be trivial in K-Homology. However, it is easy to compute that implementations of covariant derivations pair nontrivially with a minimal projection in $\mathcal{K}$, and hence they cannot lead to spectral triples over $\mathcal{T}$. Similar arguments seem to also apply to the quantum annulus.

Additionally, as pointed out in \cite{FMR}, there are fundamental reasons why APS boundary conditions are not compatible with spectral triples even in classical geometry for algebras of functions which are non-constant on the boundary, as the corresponding domains of the Dirac-type operators are not preserved by the representations of the algebra. 

In \cite{KMP}, the authors claimed to construct an even spectral triple over the quantum disk. Due to technicalities in the definition of an implementation of a derivation, however, this was not true. We clarify the generalized concept of implementations of unbounded derivations and when they lead to spectral triples in the next section. In Section 3 we establish the notation and review results from \cite{KMR}. The main result, Theorem \ref{compact_inverse}, is proved in Section 4. It states that, for a class of exponential coefficients, the operator $D$ defined in equation \eqref{D_def_ref} as a suitable Hilbert spaces implementation of a covariant derivation $\delta$ of \eqref{delta_cov}, in the quantum annulus algebra $A$, see \eqref{A_def}, has a compact parametrix; in fact the inverse of $D$ is compact.

\section{Implementations of Unbounded Derivations}
Let $A$ be a $C^*$-algebra, $\mathcal{A}\subseteq A$ a dense $*$-subalgebra and $\delta:\mathcal{A}\mapsto A$ a derivation.
Suppose that $H_1$ and $H_2$ are Hilbert spaces carrying representations of $A$ and denoted $\pi_1$ and $\pi_2$ respectively.
The following is a natural concept of an implementation of a derivation in $A$ between two Hilbert spaces, generalizing the usual notion of an implementation of an unbounded derivation.

An implementation of $\delta$ between $H_1$ and $H_2$ consists of the following:
\begin{itemize}
\item A dense subspace dom$(D)\subseteq H_1$
\item An implementing operator $D: \textrm{dom}(D)\to H_2$
\item An intertwinner $i: \textrm{dom}(D)\to H_2$
\end{itemize}
such that
\begin{enumerate}
\item $\forall a\in \mathcal{A}$, $\forall x\in \textrm{dom}(D)$
$$\pi_1(a)x\in \textrm{dom}(D)$$
\item  $\forall a\in \mathcal{A}$, $\forall x\in \textrm{dom}(D)$
$$i(\pi_1(a)x)=\pi_2(a)i(x)$$
\item  $\forall a\in \mathcal{A}$, $\forall x\in \textrm{dom}(D)$
$$D\pi_1(a)x-\pi_2(a)Dx=\pi_2(\delta(a))i(x)$$
\end{enumerate}

A special case of the above definition is when $H_1=H_2=H$, $\pi_1=\pi_2=\pi$ and $i$ is the identity map. Then the second condition above is obviously satisfied, while the third condition can be written as
\begin{equation*}
(D\pi(a)-\pi(a)D)x=\pi(\delta(a))x.
\end{equation*}
This coincides with the usual concept of an unbounded implementation of a derivation as a commutator.

Recall that a closed operator $D$ is called a Fredholm operator if there are bounded operators $Q_1$ and $Q_2$ such that $Q_1D - I$ and $DQ_2 - I$ are compact. The operators $Q_1$ and $Q_2$ are called left and right parametrices respectively. We say that a Fredholm operator $D$ has compact parametrices if at least one (and consequently both) of the parametrices $Q_1$ and $Q_2$ is compact. More on general properties of operators with compact parametrices can be found in the appendix of \cite{KMRSW}. We also say that an implementation $(\textrm{dom}(D), D, i)$ has compact parametrices if the closure of the operator $D$ has compact parametrices.

Under additional conditions an implementation of a derivation can lead to an even spectral triple over $A$. Namely, 
defining $H = H_1 \bigoplus H_2$, with grading $\Gamma \big|_{H_1} =1$ and $\Gamma \big|_{H_2} = -1$ and a representation $\pi :A \to B(H)$ of $A$ in $H$ given by the formula:
$$\pi(a) = (\pi_1(a),\pi_2(a)),$$ 
and also defining a generally unbounded operator $\dcal$ in $H$ by:
\begin{equation*}
\dcal = \left[
\begin{array}{cc}
0 & D \\
D^* & 0
\end{array}\right],
\end{equation*}
we see that $\pi(a)$ are even and $\dcal$ is odd with respect to grading $\Gamma$. If $\dcal$ is a self-adjoint operator with compact parametrix and additionally the intertwinner $i$ is bounded, then the conditions in the definition of a derivation implementation imply that $\pi(a)$ preserve the domain of $\dcal$ for all $a\in \mathcal{A}$ and the commutator $[\dcal,\pi(a)]$ is bounded as can be seen by a simple calculation. Consequently, under those additional conditions, we obtain an even spectral triple over $A$.

A very natural class of implementations of derivations can be obtained from GNS representations in the following way. Suppose $\tau_1$, $\tau_2$ are faithful states on $A$. Let $H_1$, $H_2$ be the corresponding GNS Hilbert spaces, obtained by completing $A$ with respect to the inner products
\begin{equation*}
(a,b)_i=\tau_i(a^*b),\ \ i=0,1.
\end{equation*}
Because we assume that the states are faithful, $A$ sits densely in $H_1$, $H_2$. More precisely, there are injective continuous linear maps $\phi_1:A\to H_1$, $\phi_1:A\to H_1$ with dense ranges embedding $A$ into $H_1$, $H_2$. 

The Hilbert spaces $H_1$, $H_2$ carry natural representations of $A$ given by left multiplication; for $a,b\in A$ we have
\begin{equation*}
\pi_i(a)\phi_i(b)=\phi_i(ab).
\end{equation*}
Suppose as before that we have a dense $*$-subalgebra $\mathcal{A}\subseteq A$ and a derivation $\delta:\mathcal{A}\mapsto A$. Then we have the following implementation of $\delta$ between $H_1$ and $H_2$:
\begin{itemize}
\item dom$(D):=\phi_1(\mathcal{A})\subseteq H_1$. If $x\in  \textrm{dom}(D)$ then we write $x=\phi_1(b)$ for some $b\in \mathcal{A}$, notation we use in the formulas below.
\item $D(x):=\phi_2(\delta(b))$, 
\item $i(x):=\phi_2(b)$
\end{itemize}
It is a matter of straightforward calculations to verify that indeed the three conditions of the definition are satisfied and the above defines an implementation of $\delta$ between $H_1$ and $H_2$.

\section{Quantum Annulus Preliminaries}
We review the notation and basic concepts from \cite{KMR} below and, in a number of places, we use the results contained in that paper.

\subsection{The Quantum Annulus}
Let  $\{E_l\}_{l\in\Z}$ be the canonical basis for $\ell^2(\Z)$ and $V$ be the bilateral shift defined by 
\begin{equation*}
VE_l = E_{l+1}\,.
\end{equation*} 
Notice that $V$ is a unitary.  Let $\mathbb{L}$ be the diagonal label operator defined by
\begin{equation*}
\mathbb{L}E_l = lE_l\,.
\end{equation*}
It follows from the functional calculus that given a function $a:\Z \to \C$, we have
\begin{equation*}
a(\mathbb{L})E_l = a(l)E_l\,.
\end{equation*}
These are precisely the operators which are diagonal with respect to $\{E_l\}$. The operators $(\mathbb{L},V)$ serve as noncommutative polar coordinates, and they satisfy the following commutation relation:
\begin{equation*}
\mathbb{L}V=V(\mathbb{L}+I)\,.
\end{equation*}
Let $c(\Z)$ be the set of $a(l)$, as above, which are convergent as $l\to\pm\infty$ and let $c_{00}^{++}(\Z)$ be the set of all eventually constant functions, i.e. functions $a(l)$ such that there exists a $l_0$ where $a(l)$ is a constant for $l\geq l_0$ and also is a possibly different constant for $l\leq -l_0$. 

Let $A$ be the $C^*$-algebra generated by $V$ and $a(\mathbb{L})$, that is:
 \begin{equation}\label{A_def}
A = C^*(V,a(\mathbb{L}): a(l)\in c(\Z)) 
\end{equation}
This algebra is called the quantum annulus.  The smallest reasonable domain of derivations in $A$ is the following dense $*$-subalgebra of A:
\begin{equation*}
\mathcal{A} = \left\{a = \sum_{n\in\Z} V^n a_n(\mathbb{L}) : \ a_n(l)\in c_{00}^{++}(\Z),\  \textrm{finite sums}\right\}\,.
\end{equation*}

\subsection{Derivations in the Quantum Annulus}
Let $\rho_{\theta}: A \to A $, $0\leq\theta<2\pi$, be a one parameter group of automorphisms of A defined by: 
$$\rho_\theta (a) = e^{i\theta\mathbb{L}}ae^{-i\theta\mathbb{L}}.$$
Since $\rho_\theta (a(\mathbb{L})) = a(\mathbb{L})$, $\rho_\theta (V) = e^{i\theta}V$ and consequently $\rho_\theta(V^{-1}) = e^{-i\theta}V^{-1}$, the automorphisms $\rho_\theta$ are well defined on $A$ and they preserve $\mathcal{A}$. By Proposition 3.2 in \cite{KMR}, any densely-defined derivation $\delta:\mathcal{A} \to A$, covariant with respect to  $\rho_\theta$ that is
\begin{equation*}
\rho_\theta(\delta(a)) = e^{i\theta}\delta(\rho_\theta(a))\,,
\end{equation*}
is of the following form:
\begin{equation}\label{delta_cov}
\delta(a) = [U\beta(\mathbb{L}),a]\,
\end{equation}
where $\{\beta(l+1) - \beta(l)\}\in c(\Z)$. We use notation:
\begin{equation*}
\lim_{l \to \pm\infty}(\beta(l+1) - \beta(l)) := \beta_{\pm\infty},
\end{equation*}
and below we only consider covariant derivations with $\beta_{\pm\infty}\ne 0$. It follows that there are constants $c_1$ and $c_2$ so that
\begin{equation}\label{beta_bound}
c_1(|l|+1)\le |\beta(l)|\le c_2(|l| + 1)\,.
\end{equation}

\subsection{Covariant Implementations on the Quantum Annulus}
Here we consider covariant implementations of derivations \eqref{delta_cov}. We begin by introducing the following family of states $\tau_w:A \to \C$ on $A$, defined by 
\begin{equation*}
\tau_w (a) = \textrm{tr}(w(\mathbb{L})a)\,,
\end{equation*} 
where $w(l) > 0$ for all $l \in \Z$ and 
\begin{equation*}
\sum_{l\in\Z}{w(l)}=1\,.
\end{equation*}

As a result of Proposition 4.3 in \cite{KMR}, $\tau_w$ are precisely the $\rho_\theta$-invariant, normal, faithful states on A. Let $H_w$ be the Hilbert space obtained by Gelfand-Naimark-Segal (GNS) construction on $A$ using state $\tau_w$. Since the state is faithful, $H_w$ is the completion of $A$ with respect to the inner product given by 
\begin{equation*}
\langle a,b\rangle_w = \tau_w(a^*b)\,.
\end{equation*} 
A simple calculation leads to the following precise description: 
\begin{equation}\label{f_expansion}
H_w = \left\{f = \sum_{n\in\Z}V^n f_n(\mathbb{L}) : \|f\|_w^2= \sum_{n \in\Z} \sum_{l\in\Z} w(l)|f_n(l)|^2 <\infty\right\}\,.
\end{equation} 
With this identification we naturally have $A\subseteq H_w$ so the inclusion maps $\phi_w: A\to H_w$ are the identity maps.
Notice also that $\mathcal{A}$ is dense in $H_w$. 
The GNS representation map $\pi_w : A \to B(H_w)$ is given by left-hand multiplication:
\begin{equation*}
\pi_w (a)f = af\,.
\end{equation*}

Define a one parameter group of unitary operators $V_\theta ^w:H_w \to H_w$ via the formula: 
\begin{equation*}
V_\theta^wf = \sum_{n\in\Z}V^ne^{in\theta}f_n(\mathbb{L})\,.
\end{equation*}
An immediate calculation shows that:
\begin{equation*}
\pi_w(\rho_\theta(a)) = V_\theta^w\pi_w(a)(V_\theta^w)^{-1}\,,
\end{equation*}
and therefore the operators $V_\theta^w$ are implementing the one parameter group of automorphisms $\rho_\theta$.

Consider an additional weight, $w'(l)$, possibly different from $w(l)$, satisfying the same conditions. Proceeding like in the previous section, we set
\begin{equation*}
\textrm{dom}(D) := \mathcal A \subset H_{w}\,,
\end{equation*}
and choose for an implementing operator
\begin{equation*}
i:\textrm{dom}(D) = \mathcal A\to \mathcal A \subset H_{w'}\,
\end{equation*}
to be the identity operator $a\mapsto a$. Clearly, the first two properties of an implementation are satisfied. We say that
an operator $D:H_w\supseteq \mathcal{A} \to H_{w'}$ defines a {\it covariant implementation} of the covariant derivation \eqref{delta_cov} if for every $a\in\mathcal{A}$, and for every $f\in\mathcal{A}$ considered as an element of both $H_w$ and $H_{w'}$, we have:
\begin{equation*}
D\pi_w(a)f -\pi_{w'}(a)Df = \pi_{w'}(\delta(a))f\,,
\end{equation*}
and, additionally, $D$ satisfies:
\begin{equation*}
V_\theta^{w'} D (V_\theta^w)^{-1}f=e^{i\theta}Df\,.
\end{equation*}

Proceeding as in Proposition 5.2 in \cite{KMR} shows the following result.
\begin{prop}\label{D_formula}
There exists a sequence $\{\alpha(l)\}$ satisfying
\begin{equation}\label{alpha_condition}
\sum_{l\in\Z}|\beta(l)-\alpha(l)|^2w'(l)<\infty
\end{equation}
such that any covariant implementation $D:H_w\supseteq \mathcal{A} \to H_{w'}$ is of the form:
\begin{equation}\label{D_def_ref}
Df = V\beta(\mathbb{L})f - fV\alpha(\mathbb{L})\,.
\end{equation}
Conversely, for any $\{\alpha(l)\}$ satisfying \eqref{alpha_condition}, the formula \eqref{D_def_ref} defines a covariant implementation $D:H_w\supseteq \mathcal{A} \to H_{w'}$ of the derivation \eqref{delta_cov}.
\end{prop}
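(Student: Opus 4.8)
The plan is to argue in two directions, exactly paralleling the structure of Proposition 5.2 in \cite{KMR}, but keeping careful track of the fact that the source and target Hilbert spaces now carry different weights $w$ and $w'$.

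First I would extract the general form of a covariant implementation. Given any $D:H_w\supseteq\mathcal A\to H_{w'}$ satisfying the two displayed conditions, I apply the intertwining identity $D\pi_w(a)f-\pi_{w'}(a)Df=\pi_{w'}(\delta(a))f$ with $a=V$ and $a=a_n(\mathbb L)$ for suitable eventually-constant coefficients, and with $f=I$ (the unit of $\mathcal A$, which lies in $\mathcal A\subseteq H_w\cap H_{w'}$). Writing $D(I)=\sum_{n}V^n d_n(\mathbb L)$ as in \eqref{f_expansion}, the covariance condition $V_\theta^{w'}D(V_\theta^w)^{-1}=e^{i\theta}D$ forces $D$ to raise the $V$-degree by exactly one, so only the $n=1$ component survives: $D(I)=V\gamma(\mathbb L)$ for some sequence $\gamma(l)$. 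Testing the commutator relation against $a=V$ and $a=a_n(\mathbb L)\in c_{00}^{++}(\Z)$ and using the commutation rule $\mathbb L V=V(\mathbb L+I)$, one reconstructs $Df$ for every $f\in\mathcal A$; the $V\beta(\mathbb L)f$ term appears because $\delta(a)=[U\beta(\mathbb L),a]$ and $\delta(V)=V(\beta(\mathbb L+I)-\beta(\mathbb L))$ (here $U=V$ on this Hilbert space realization), while the right-multiplication term $fV\alpha(\mathbb L)$ with $\alpha(l):=\gamma(l)$ collects the homogeneous part. This yields the formula \eqref{D_def_ref} with $\alpha$ determined by $D(I)$.

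Next I would establish the constraint \eqref{alpha_condition}. The point is that $D(I)=V\alpha(\mathbb L)$ must actually be an element of $H_{w'}$, i.e. $\sum_l|\alpha(l)|^2 w'(l)<\infty$. Separately, $\delta(V)$-type considerations show $V\beta(\mathbb L)\cdot I=V\beta(\mathbb L)$ appears, and although $\beta$ itself grows linearly by \eqref{beta_bound} and need not lie in $H_{w'}$, the \emph{difference} is what must be square-summable against $w'$: more precisely, applying the defining identity with $f=I$ and $a$ chosen so that $\pi_{w'}(\delta(a))I$ is controlled, one sees that $D(I)-V\beta(\mathbb L)=-V\alpha(\mathbb L)$ together with the requirement that $D$ map into $H_{w'}$ forces $V(\beta(\mathbb L)-\alpha(\mathbb L))\in H_{w'}$, which is precisely \eqref{alpha_condition}. (Alternatively one observes $DI\in H_{w'}$ and separately that the operator $f\mapsto V\beta(\mathbb L)f-fV\beta(\mathbb L)=\delta$-like piece sends $I\mapsto 0$ in the relevant sense, isolating $\beta-\alpha$.)

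For the converse, given any $\{\alpha(l)\}$ with \eqref{alpha_condition}, I would define $D$ by \eqref{D_def_ref} on $\mathcal A$ and verify the three requirements directly. Well-definedness, i.e. $Df\in H_{w'}$ for $f\in\mathcal A$: write $Df=(V\beta(\mathbb L)-V\alpha(\mathbb L))f+V\alpha(\mathbb L)f-fV\alpha(\mathbb L)$; the first group lies in $H_{w'}$ by \eqref{alpha_condition} together with the fact that multiplication by a finitely-supported-in-$V$-degree element of $\mathcal A$ preserves $H_{w'}$, and the last two terms involve only the bounded-on-each-coefficient multiplications since $f\in\mathcal A$ has finitely many $V$-components with eventually constant coefficients. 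The intertwining property \eqref{D_def_ref} gives, for $a,f\in\mathcal A$,
\begin{equation*}
D(af)-aDf = V\beta(\mathbb L)af - afV\alpha(\mathbb L) - aV\beta(\mathbb L)f + afV\alpha(\mathbb L) = [V\beta(\mathbb L),a]f = \delta(a)f,
\end{equation*}
which is condition (3) with $i=\mathrm{id}$; conditions (1) and (2) are immediate. Finally covariance follows since conjugating $V\beta(\mathbb L)(\cdot)$ and $(\cdot)V\alpha(\mathbb L)$ by $V_\theta^w$, $V_\theta^{w'}$ each produces the factor $e^{i\theta}$ coming from the single extra power of $V$.

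The main obstacle is the bookkeeping in the first part: disentangling, from the abstract intertwining and covariance relations alone, that $D$ is determined by the single vector $D(I)=V\alpha(\mathbb L)$ and that the linearly growing $\beta$ contribution is forced, while simultaneously squeezing out the square-summability condition \eqref{alpha_condition} rather than a weaker or stronger bound. Everything else is routine algebra with the relation $\mathbb L V=V(\mathbb L+I)$ and norm estimates in $H_{w'}$.
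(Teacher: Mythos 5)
The paper does not give its own proof of this proposition; it simply asserts that one proceeds as in Proposition 5.2 of \cite{KMR}, and your outline follows that standard strategy (determine $D$ from its action on the unit via covariance and the intertwining identity, then reverse). However, your first paragraph contains a slip that you then silently contradict in the second. Having correctly shown by covariance that $D(I)=V\gamma(\mathbb L)$, you set $\alpha(l):=\gamma(l)$; but substituting $f=I$ into \eqref{D_def_ref} gives $D(I)=V(\beta(\mathbb L)-\alpha(\mathbb L))$, so $\gamma=\beta-\alpha$, i.e.\ $\alpha=\beta-\gamma$, \emph{not} $\alpha=\gamma$. Once corrected, \eqref{alpha_condition} drops out immediately as the requirement $D(I)\in H_{w'}$, and the circuitous argument of your second paragraph is unnecessary. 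The cleanest route is simply to put $f=I$ in the intertwining relation: $D(a)=aD(I)+\delta(a)=aV\gamma(\mathbb L)+[V\beta(\mathbb L),a]=V\beta(\mathbb L)a-aV(\beta-\gamma)(\mathbb L)$, which is \eqref{D_def_ref} with $\alpha=\beta-\gamma$, and \eqref{alpha_condition} is exactly the condition $D(I)\in H_{w'}$. (Minor typo: $\delta(V)=V^2(\beta(\mathbb L+1)-\beta(\mathbb L))$, not $V(\cdots)$.)

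For the converse your intertwining computation is fine, but the well-definedness decomposition $Df=V(\beta-\alpha)(\mathbb L)f+[V\alpha(\mathbb L),f]$ is not ideal: the first term has the Fourier component $(\beta-\alpha)(l+n)f_n(l)$, and bounding $\sum_l|\beta(l+n)-\alpha(l+n)|^2 w'(l)$ requires control of $w'(l-n)/w'(l)$, which is not part of the hypotheses at this level of generality. It is cleaner to compute $(Df)_{n+1}(l)=\beta(l+n)f_n(l)-f_n(l+1)\alpha(l)$ directly and note that for $|l|$ large this equals $f_n^{\pm}\bigl((\beta(l+n)-\beta(l))+(\beta(l)-\alpha(l))\bigr)$, a bounded sequence plus a square-$w'$-summable one, and $\sum_l w'(l)<\infty$ handles the bounded piece.
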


We assume below that for every $l$ we have:
\begin{equation*}
\alpha(l), \beta(l)\neq 0\,.
\end{equation*}
It is convenient, like in \cite{KMRSW} and \cite{KMR}, to write
\begin{equation*}
\alpha(l) = \beta(l)\frac{\mu(l+1)}{\mu(l)}
\end{equation*}
for some sequence $\{\mu(l)\}$ such that $\mu(0)=1$.

\subsection{Fourier Decomposition}
To further analyze the operator $D$ of formula \eqref{D_def_ref}, we can decompose it into a Fourier series and study its Fourier components which are operators acting between weighted $\ell^2$-spaces, defined as follows:
\begin{equation*}
\ell^2_w=\left\{\{f(l)\}_{l\in\Z}: \sum_{l\in\Z}|f(l)|^2w(l)<\infty\right\}\,.
\end{equation*}
We have the following decomposition proposition:

\begin{prop}\label{D_decomp}
Let $f\in\textrm{dom}(D)$.  Then
\begin{equation*}
Df = \sum_{n\in\Z}V^{n+1}(D_nf_n)(\mathbb{L})\,, 
\end{equation*}
where $D_n:\ell_w^2\supseteq c_{00}^{++}(\Z) \to \ell_{w'}^2$ and is given by the following formula:
\begin{equation*}
(D_nh)(l) = \beta(l+n)h(l) - \beta(l)\frac{\mu(l+1)}{\mu(l)}h(l+1)
\end{equation*}
for some $h\in c_{00}^{++}(\Z)$.
\end{prop}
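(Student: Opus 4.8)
The plan is to carry out a direct Fourier computation on the operator $D$ of \eqref{D_def_ref}, tracking how the basic building blocks $V$ and $a(\mathbb{L})$ act on an element $f = \sum_{n} V^n f_n(\mathbb{L})$ of $\textrm{dom}(D) = \mathcal{A}$. First I would expand $Df = V\beta(\mathbb{L})f - fV\alpha(\mathbb{L})$ term by term: since $\beta(\mathbb{L})$ is diagonal, $V\beta(\mathbb{L}) V^n f_n(\mathbb{L}) = V^{n+1}\beta(\mathbb{L}+nI) f_n(\mathbb{L})$, using the commutation relation $\mathbb{L}V = V(\mathbb{L}+I)$ iterated $n$ times to move $\beta(\mathbb{L})$ past $V^n$. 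Evaluated at the label $l$ this is the term $\beta(l+n) f_n(l)$, which produces the first summand of $D_n$. For the second piece, $f V \alpha(\mathbb{L}) = \sum_n V^n f_n(\mathbb{L}) V \alpha(\mathbb{L}) = \sum_n V^{n+1} f_n(\mathbb{L}+I)\alpha(\mathbb{L})$ by the same commutation relation (once), and substituting $\alpha(l) = \beta(l)\mu(l+1)/\mu(l)$ gives the term $\beta(l)\tfrac{\mu(l+1)}{\mu(l)} f_n(l+1)$ at label $l$, which is the second summand. Collecting the coefficient of $V^{n+1}$ then yields exactly $(D_n f_n)(l) = \beta(l+n) f_n(l) - \beta(l)\tfrac{\mu(l+1)}{\mu(l)} f_n(l+1)$.

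Next I would address the mapping-property claim that $D_n : \ell^2_w \supseteq c_{00}^{++}(\Z) \to \ell^2_{w'}$ is well defined, i.e. that $D_n h \in \ell^2_{w'}$ whenever $h \in c_{00}^{++}(\Z)$. Since $h$ is eventually constant, $h$ and its shift $l \mapsto h(l+1)$ are bounded sequences; the first term $\beta(l+n)h(l)$ grows at most linearly in $|l|$ by the two-sided bound \eqref{beta_bound}, and for the second term one writes $\beta(l)\tfrac{\mu(l+1)}{\mu(l)} = \alpha(l)$ and notes $\alpha(l) = \beta(l) - (\beta(l)-\alpha(l))$, where $\beta(l)-\alpha(l)$ is square-summable against $w'$ by \eqref{alpha_condition} and $\beta(l)h(l+1)$ is again at most linear. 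Because $w'$ is a probability weight, $\sum_l (|l|+1)^2 w'(l)$ need not be finite in general — but here the key observation is that $h \in c_{00}^{++}$ means $h(l)$ and $h(l+1)$ are \emph{eventually constant}, so outside a finite set the expression $\beta(l+n)h(l) - \beta(l)\tfrac{\mu(l+1)}{\mu(l)}h(l+1)$ equals $\big(\beta(l+n)-\beta(l)\big)\cdot(\text{const}) + \big(\beta(l)-\alpha(l)\big)\cdot(\text{const})$ plus corrections that vanish for large $|l|$; the first bracket tends to the finite limit $n\beta_{\pm\infty}$ and the second is in $\ell^2_{w'}$, so $D_n h \in \ell^2_{w'}$. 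I would state this cancellation carefully, since it is the one genuinely non-formal point.

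The remaining bookkeeping — that the double sum over $n$ reassembles correctly and that reindexing $V^n \mapsto V^{n+1}$ is legitimate — is immediate because $f \in \mathcal{A}$ is a \emph{finite} sum, so no convergence issue arises at this stage and one may simply match coefficients of the basis elements $V^{n+1} E_l$. I expect the main obstacle to be precisely the verification in the previous paragraph that $D_n$ lands in $\ell^2_{w'}$: one has to exploit both the eventual-constancy of elements of $c_{00}^{++}(\Z)$ and the telescoping behavior of $\beta$ built into the definition of a covariant derivation (the hypothesis $\{\beta(l+1)-\beta(l)\} \in c(\Z)$), together with the summability condition \eqref{alpha_condition} on $\beta - \alpha$; a naive bound using only \eqref{beta_bound} would not suffice. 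Everything else is a direct application of the commutation relation $\mathbb{L}V = V(\mathbb{L}+I)$ and the substitution $\alpha(l) = \beta(l)\mu(l+1)/\mu(l)$.
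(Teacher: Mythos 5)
Your proposal is correct and follows essentially the same route as the paper: expand $f$ in its Fourier series, apply $D$, and commute $\beta(\mathbb{L})$ and $f_n(\mathbb{L})$ past powers of $V$ using $\mathbb{L}V = V(\mathbb{L}+I)$. The paper's proof is a one-sentence version of exactly this computation. Your additional paragraph verifying that $D_n h$ actually lands in $\ell^2_{w'}$ — exploiting the eventual constancy of $h$, the boundedness of the telescoped difference $\beta(l+n)-\beta(l)$, and the condition \eqref{alpha_condition} on $\beta - \alpha$ — is a careful piece of bookkeeping that the paper leaves implicit (it is baked into the conclusion of Proposition \ref{D_formula} that $D$ maps $\mathcal{A}$ into $H_{w'}$), and you are right that a naive linear-growth bound via \eqref{beta_bound} alone would not suffice.
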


\begin{proof}
The proof follows by writing $f\in\textrm{dom}(D)$ as its Fourier series, applying $D$ to it and using the commutation relation $\mathbb{L}V = V(\mathbb{L}+I)$.
\end{proof}
In what follows, the purpose is to choose the parameters such that $D$  has a compact parametrix.

\subsection{Parametrices}

Next we study a formal candidate for a parametrix of $D$.  The closure of $D$, defined as above on $c_{00}^{++}(\Z)$, will be denoted by $\bar D$ while its closure defined on the space $c_{00}(\Z)$ of eventually zero functions will be denoted by $\bar D_{00}$. Also notice that $D$ preserves $c_{00}(\Z)$.
We have the following simple observation.
\begin{prop}  If $\{\alpha(l)\}$ satisfies \eqref{alpha_condition} then $\bar D=\bar D_{00}$.
\end{prop}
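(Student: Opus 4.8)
The plan is to show that the two closures have the same domain, which, since $c_{00}(\Z)\subseteq c_{00}^{++}(\Z)$ gives $\bar D_{00}\subseteq \bar D$ automatically, reduces to proving the reverse inclusion $\bar D\subseteq\bar D_{00}$. Concretely, given $f\in c_{00}^{++}(\Z)$ I want to produce a sequence $f^{(k)}\in c_{00}(\Z)$ with $f^{(k)}\to f$ in $H_w$ (equivalently in $\ell^2_w$ on each Fourier component) and $Df^{(k)}\to Df$ in $H_{w'}$; then any element of the graph of $\bar D$ is a limit of graph elements of $\bar D_{00}$. By Proposition \ref{D_decomp} it suffices to do this one Fourier component at a time, i.e. for a single operator $D_n$ acting on $\ell^2_w$-type spaces, and by linearity it is enough to treat the two ``tails'': a function equal to the constant $1$ for $l\geq l_0$ and $0$ elsewhere, and similarly on the left tail.

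The key construction is a cutoff. For the right tail, let $\chi_N$ be the indicator of $\{l_0\le l\le N\}$ and set $h^{(N)}=\chi_N h$, which lies in $c_{00}(\Z)$. Then $h^{(N)}\to h$ in $\ell^2_w$ because $\sum_{l\ge l_0} w(l)<\infty$. Now compare $D_n h^{(N)}$ with $D_n h$ using the explicit formula $(D_n h)(l)=\beta(l+n)h(l)-\beta(l)\frac{\mu(l+1)}{\mu(l)}h(l+1)$: the difference is supported near $l=N$ and near wherever $\chi_N$ jumps, and its $\ell^2_{w'}$-norm is controlled by two types of terms — genuine tail terms $\sum_{l>N}|\cdots|^2 w'(l)$ and one ``boundary'' term at $l=N$ of the form $|\beta(N)\frac{\mu(N+1)}{\mu(N)}|^2 w'(N)$ coming from the mismatch $h^{(N)}(N+1)=0$ versus $h(N+1)=1$. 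The tail terms vanish as $N\to\infty$ provided the relevant series converge; here is where hypothesis \eqref{alpha_condition} enters, since $\alpha(l)=\beta(l)\frac{\mu(l+1)}{\mu(l)}$ and \eqref{alpha_condition} says exactly $\sum_l|\beta(l)-\alpha(l)|^2 w'(l)<\infty$, which together with \eqref{beta_bound} and $\sum_l w'(l)<\infty$ forces $\sum_l|\alpha(l)|^2 w'(l)<\infty$ as well, hence in particular $|\alpha(N)|^2 w'(N)\to 0$. That kills the boundary term too. An entirely symmetric argument handles the left tail with a cutoff at $-N$.

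The main obstacle, and the only place any real work is needed, is the bookkeeping that makes $\sum_l|\alpha(l)|^2 w'(l)<\infty$ follow from \eqref{alpha_condition}: one writes $|\alpha(l)|\le |\beta(l)-\alpha(l)|+|\beta(l)|$, squares, and uses $\sum_l|\beta(l)|^2 w'(l)<\infty$ — which itself must be extracted from \eqref{beta_bound}, so one is implicitly assuming (as the paper does elsewhere) that $w'$ decays fast enough that $\sum_l (|l|+1)^2 w'(l)<\infty$. Once that convergence is in hand, the rest is the routine cutoff-and-estimate argument above, carried out componentwise in $n$ and then summed (only finitely many $n$ occur for any fixed $f\in\mathcal A$, so there is no issue exchanging the sum over $n$ with the limit). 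I would therefore organize the proof as: (i) reduce to a single $D_n$ and then to the two tail generators; (ii) record the consequence $\sum_l|\alpha(l)|^2w'(l)<\infty$ of \eqref{alpha_condition}; (iii) exhibit the cutoffs and verify graph convergence.
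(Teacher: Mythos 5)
Your proposal follows essentially the same route as the paper: reduce to the two tail generators ($1$ and $\chi_{\Z_{\ge 0}}$, i.e., the co-dimension-$2$ complement of $c_{00}(\Z)$ in $c_{00}^{++}(\Z)$), approximate them by cutoffs, and use \eqref{alpha_condition} to control $D$ of the cutoffs. You have, however, put your finger on a real subtlety that the paper's one-line justification (``we see that $D(\chi_N)$ converges'') skips over: with the sharp cutoff $\chi_N$, the error $D_n(1)-D_n(\chi_N)$ contains boundary terms of the form $|\alpha(N)|^2w'(N)$ and $|\beta(-N-1+n)|^2w'(-N-1)$, and these do \emph{not} vanish from $\sum_l w'(l)<\infty$ and \eqref{alpha_condition} alone.

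Where I would push back is on your remedy. Assuming $\sum_l(|l|+1)^2 w'(l)<\infty$ is a genuinely stronger hypothesis than the proposition states (the only standing assumptions on $w'$ are positivity and summability), so your argument proves a weaker statement. The proposition is nevertheless true as stated, but it requires a more flexible cutoff. Write $D_n(\psi-1)(l)=\bigl(\beta(l+n)-\alpha(l)\bigr)\bigl(\psi(l)-1\bigr)+\alpha(l)\bigl(\psi(l)-\psi(l+1)\bigr)$ and further split $\alpha=\beta+(\alpha-\beta)$; the only problematic contribution is $\sum_l|\beta(l)|^2|\psi(l)-\psi(l+1)|^2w'(l)$, controlled via \eqref{beta_bound} by $\sum_l(|l|+1)^2w'(l)\,\delta_l^2$ where $\delta_l=\psi(l)-\psi(l+1)\ge 0$, $\sum\delta_l=1$. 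Choosing $\delta_l\propto\bigl((|l|+1)^2w'(l)\bigr)^{-1}$ on a window $[N,R]$, Cauchy--Schwarz gives the minimum value $\bigl(\sum_{l=N}^{R-1}((|l|+1)^2w'(l))^{-1}\bigr)^{-1}$, and the AM--GM inequality $w'(l)+((l+1)^2w'(l))^{-1}\ge 2/(l+1)$ together with $\sum_lw'(l)<\infty$ forces $\sum_l((|l|+1)^2w'(l))^{-1}=\infty$, so this tends to $0$ as $R\to\infty$. With this ``ramp'' in place of $\chi_N$, the argument closes under exactly the hypotheses stated, without your extra decay assumption on $w'$.
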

\begin{proof}Notice that $c_{00}(\Z)\subset \textrm{dom}(D)$ is a co-dimension 2 subspace. Thus, it is enough to verify that 1 and the characteristic function of $\Z_{\ge 0}$ are in the domain of $\bar D$. Approximating 1 by characteristic functions $\chi_N$ of sets $-N\le l\le N$, $\chi_N\in c_{00}(\Z)$, we see that $D(\chi_N)$ converges in $\ell_w^2$ to $D(1)$, which is in $\ell_w^2$ by \eqref{alpha_condition}, implying that 1 is in the closure of $D$. The characteristic function of $\Z_{\ge 0}$ is in the domain of $\bar D$ by the same argument.
\end{proof}

%
%
%
%

Let $Q_n$ be given by the following formula:
\begin{equation*}
(Q_ng)(l) = \left\{
\begin{aligned}
&\sum_{j=l}^\infty\frac{\prod_{k=l}^{l+n-1}\beta(k)}{\prod_{k=j}^{j+n}\beta(k)}\cdot\frac{\mu(j)}{\mu(l)}g(j) &&n\ge0 \\
&\sum_{j=l}^\infty\frac{\prod_{k=j+n+1}^{j-1}\beta(k)}{\prod_{k=l+n}^{l-1}\beta(k)}\cdot\frac{\mu(j)}{\mu(l)}g(j)  &&n<0.
\end{aligned}\right.
\end{equation*}
This expression was obtained by inverting $D_n$ using techniques similar to the calculations in Proposition 4.12 in \cite{KM}. Notice that we have: 
$$Q_n: c_{00}(\Z)\to \textrm{dom}(D),$$ 
since the sums in the definitions of $Q_n$ are finite for $g\in  c_{00}(\Z)$ and the outcomes are eventually constant. Relations between $D_n$ and $Q_n$ are explained in the following statements.

\begin{prop} For every $f\in c_{00}(\Z)$ we have:
\begin{equation*}
D_nQ_nf=f\ \textrm{ and } \ Q_nD_nf=f\,.
\end{equation*}

\end{prop}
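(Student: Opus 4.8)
The plan is to verify the two identities by direct substitution, treating the cases $n \ge 0$ and $n < 0$ separately since $Q_n$ has a different closed form in each. First I would fix $n \ge 0$ and write out $(D_nQ_nf)(l)$ using the formula $(D_nh)(l) = \beta(l+n)h(l) - \beta(l)\tfrac{\mu(l+1)}{\mu(l)}h(l+1)$ with $h = Q_nf$. This produces a difference of two infinite sums (finite, in fact, since $f\in c_{00}(\mathbb{Z})$): one indexed starting at $j=l$ with prefactor $\beta(l+n)$, the other indexed starting at $j=l+1$ with prefactor $-\beta(l)\tfrac{\mu(l+1)}{\mu(l)}$. The key observation is that the ratio of the summand in the second sum at index $j$ to the summand in the first sum at the same index $j$ telescopes cleanly: the factor $\beta(l)\tfrac{\mu(l+1)}{\mu(l)}$ combined with the shifted products $\prod_{k=l+1}^{l+n}\beta(k)$ and $\tfrac{\mu(j)}{\mu(l+1)}$ reassembles exactly into $\beta(l+n)\cdot\tfrac{\prod_{k=l}^{l+n-1}\beta(k)}{\prod_{k=l}^{l+n-1}\beta(k)}\cdots$ — i.e. the two sums agree term-by-term for $j \ge l+1$, so everything cancels except the $j = l$ term of the first sum, which equals $\beta(l+n)\cdot\tfrac{\prod_{k=l}^{l+n-1}\beta(k)}{\prod_{k=l}^{l+n}\beta(k)}\cdot\tfrac{\mu(l)}{\mu(l)}f(l) = f(l)$. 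That settles $D_nQ_n = I$ for $n \ge 0$.

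For $Q_nD_nf = f$ with $n\ge 0$, I would substitute $g = D_nf$ into the formula for $Q_n$, giving
$$(Q_nD_nf)(l) = \sum_{j=l}^\infty \frac{\prod_{k=l}^{l+n-1}\beta(k)}{\prod_{k=j}^{j+n}\beta(k)}\cdot\frac{\mu(j)}{\mu(l)}\Big(\beta(j+n)f(j) - \beta(j)\tfrac{\mu(j+1)}{\mu(j)}f(j+1)\Big).$$
Here I expect the sum to collapse as a telescoping series in $j$: the coefficient of $f(j)$ coming from the $\beta(j+n)f(j)$ piece at index $j$ should cancel against the coefficient of $f(j)$ coming from the $-\beta(j-1)\tfrac{\mu(j)}{\mu(j-1)}f(j)$ piece at index $j-1$, for all $j \ge l+1$, leaving only the $j=l$ contribution $\tfrac{\prod_{k=l}^{l+n-1}\beta(k)}{\prod_{k=l}^{l+n}\beta(k)}\cdot\beta(l+n)f(l) = f(l)$. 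The boundary behavior at $j \to \infty$ is harmless because $f$ is eventually zero, so no limiting term survives. The case $n < 0$ is handled identically using the second branch of the $Q_n$ formula; the products $\prod_{k=j+n+1}^{j-1}\beta(k)$ and $\prod_{k=l+n}^{l-1}\beta(k)$ play the symmetric role, and the same term-by-term cancellation occurs.

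The main obstacle is purely bookkeeping: getting the index ranges in the products $\prod_{k=\cdot}^{\cdot}\beta(k)$ and the shifted $\mu$-ratios to line up correctly after applying the shift $l \mapsto l+1$ inside $D_n$, and making sure the telescoping is exact rather than off by one factor of $\beta$ somewhere. I would organize this by defining, for fixed $l$, the summand $S(j) := \tfrac{\prod_{k=l}^{l+n-1}\beta(k)}{\prod_{k=j}^{j+n}\beta(k)}\cdot\tfrac{\mu(j)}{\mu(l)}$ and checking the single algebraic identity $\beta(l)\tfrac{\mu(l+1)}{\mu(l)}\,S'(j) = \beta(l+n)\,S(j)$ for $j \ge l+1$, where $S'$ is the analogous summand for $Q_n$ centered at $l+1$; once that one-line relation is verified, both identities follow immediately. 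Since $f \in c_{00}(\mathbb{Z})$ all sums are finite and there are no convergence issues, so the proof reduces entirely to this finite telescoping computation.
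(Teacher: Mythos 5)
Your plan is correct and is exactly the direct telescoping computation the paper alludes to when it says ``the formulas follow from straightforward calculations.'' In particular, the algebraic identity you isolate — $\beta(l)\frac{\mu(l+1)}{\mu(l)}S'(j)=\beta(l+n)S(j)$, which just amounts to shifting one $\beta$ factor from one end of the product to the other — is the right kernel of the argument, the boundary term at $j=l$ produces $f(l)$ as claimed, and the $n<0$ branch goes through by the same cancellation with the role of numerator and denominator products reversed.
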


\begin{proof} The formulas follow from straightforward calculations.
\end{proof}

From this proposition we can formally define the inverse for $D$ to be $Q = \sum_{n\in\Z}Q_n$.  For this to be well-defined, the series needs to converge.  In fact, once we verify that $Q$ is bounded, the previous two propositions imply that $Q$ is the inverse of $\bar D$.

\section{Results}

 For the remainder of this section we assume that $\beta(k)= k + \frac{1}{2}$.  Moreover we only consider the special choices of the weights $\{w(l)\}$, $\{w'(l)\}$ and the choice for $\{\mu(l)\}$ namely:
\begin{equation*}
w(l) = e^{-a|l|},\quad w'(l) = e^{-b|l|},\quad\textrm{and }\mu(l) = e^{-(\gamma l)/2},\textrm{ for }a,b, \gamma>0\,.
\end{equation*}
It should be noted that with these specific choices $\alpha(l)= e^{-\gamma /2}\beta(l)$ and the conditions in Proposition \ref{D_formula} are trivially satisfied. By a simple perturbative argument the results below are valid for a much larger class of coefficients, see the remark at the end of the next subsection.

\subsection{Compactness of Parametrices}
We show that $Q_n$ are Hilbert-Schmidt operators for every $n$ and verify that their respective Hilbert-Schmidt norms go to zero as $n$ goes to infinity, implying that $Q$ is a compact operator.  To prove this we need a few helper lemmas. We postpone proofs of those lemmas until the next subsection.

\begin{lem}\label{lem1}
For $0\le l\le n$, define the following product:
\begin{equation*}
q_n(l) = \frac{\left(\frac{1}{2}\right)^2\left(\frac{3}{2}\right)^2\cdots\left(n-\frac{1}{2}\right)^2}{\left(l-\frac{1}{2}\right)^2\left(l-\frac{3}{2}\right)^2\cdots\left(l-n+\frac{1}{2}\right)^2}\,.
\end{equation*}
Then we have the identity:
\begin{equation*}
q_n(l) = \frac{((2n-2l+1)\cdots(2n-3)(2n-1))^2}{(1\cdot 3\cdot5\cdots (2l-1))^2}\textrm{ for every natural number }n\,.
\end{equation*}
Moreover, $q_n(l)$ satisfies the following three estimates:
\begin{equation*}
(1)\ q_n(l)\ge1,
\quad (2)\ q_n(l)\le 2l\left(
\begin{array}{c}
2n \\ 2l
\end{array}\right),
\quad (3)\ \frac{q_n(j)}{q_n(l)}\le \left(
\begin{array}{c}
2n \\ 2l
\end{array}\right)\textrm{ for }0\le j\le l\,.
\end{equation*}
\end{lem}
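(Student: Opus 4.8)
The plan is to prove the closed-form identity first and then derive the three estimates from it, since the identity converts the doubly-infinite-looking quotient of squares into a ratio of products of odd integers that is easy to bound.

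\textbf{The identity.} First I would rewrite the numerator $\left(\tfrac12\right)^2\left(\tfrac32\right)^2\cdots\left(n-\tfrac12\right)^2$ as $\left(\tfrac{1}{2^n}\right)^2(1\cdot 3\cdot 5\cdots(2n-1))^2$ and the denominator $\left(l-\tfrac12\right)^2\left(l-\tfrac32\right)^2\cdots\left(l-n+\tfrac12\right)^2$ as $\left(\tfrac{1}{2^n}\right)^2\big((2l-1)(2l-3)\cdots(2l-2n+1)\big)^2$; the factors of $2^{-n}$ cancel. Now the set of odd integers $\{2l-1,2l-3,\ldots,2l-2n+1\}$ consists of $2n$ consecutive odd numbers descending from $2l-1$; since $0\le l\le n$, the largest is $2l-1\le 2n-1$ and the smallest is $2l-2n+1\le 1$, so this set splits into the positive odd integers $2l-1,2l-3,\ldots,1$ (there are $l$ of them) and the remaining $n$ ones which are $-1,-3,\ldots,-(2n-2l-1)$, i.e.\ in absolute value $1,3,\ldots,2n-2l-1$. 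Taking the square kills the sign, so the denominator equals $\big((1\cdot3\cdots(2l-1))\cdot(1\cdot3\cdots(2n-2l-1))\big)^2$. The numerator $(1\cdot3\cdots(2n-1))^2$ has $n$ factors; cancelling the common block $1\cdot 3\cdots(2n-2l-1)$ leaves in the numerator the $l$ factors $(2n-2l+1)(2n-2l+3)\cdots(2n-1)$, which is exactly the claimed form. I should be slightly careful about the edge cases $l=0$ (empty products, $q_n(0)=1$) and $l=n$; both check out directly.

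\textbf{The estimates.} For (1), note that after the identity every factor in the numerator $2n-2l+1,\ldots,2n-1$ is $\ge 2l-1,\ldots,$ wait — more cleanly: the numerator is a product of $l$ odd integers each $\ge$ the corresponding factor of the denominator $1,3,\ldots,2l-1$ when $l\le n$, since $2n-2l+2k-1\ge 2k-1$; hence the ratio is $\ge 1$. For (2), I would relate $q_n(l)$ to a binomial coefficient: multiply and divide by $(2\cdot 4\cdots 2l)^2=(2^l l!)^2$ to get $q_n(l)=\frac{\big((2n-2l+1)\cdots(2n-1)\big)^2 (2\cdot4\cdots 2l)^2}{\big((2l)!\big)^2}$, then bound $(2n-2l+1)(2n-2l+3)\cdots(2n-1)\cdot(2n-2l+2)(2n-2l+4)\cdots(2n)$ from above — inserting the even factors only enlarges things after controlling the discrepancy — to land on $\binom{2n}{2l}$ times a linear-in-$l$ error; the factor $2l$ absorbs the single leftover term $2n/(2n-2l+\ldots)$-type ratio. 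For (3), since $q_n$ as a function of $l$ is, up to the identity, $\prod_{k=1}^{l}\frac{2n-2l+2k-1}{2k-1}$, the quotient $q_n(j)/q_n(l)$ telescopes; each retained factor is at most the corresponding factor of $\binom{2n}{2l}/\binom{2n}{2j}$-type expression, and one bounds the whole thing by $\binom{2n}{2l}$ using $j\le l$ and monotonicity of binomial coefficients up to the middle.

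\textbf{Main obstacle.} The identity itself is routine bookkeeping with odd-integer products; the genuinely delicate part is estimate (2), getting the clean linear factor $2l$ in front of $\binom{2n}{2l}$ rather than something like $\binom{2n}{2l}$ times a cruder polynomial. The trick is to pair the $l$ odd numerator factors with the $l$ even numbers $2n-2l+2,\ldots,2n$ to complete a descending block of $2l$ consecutive integers ending at $2n$, giving $(2n)!/(2n-2l)!$, and then to note that dividing by $(2l)!$ yields $\binom{2n}{2l}$ while the price paid for having used $(2\cdot4\cdots 2l)^2/(2\cdot4\cdots 2l)$ rather than matching exactly is a single surviving factor bounded by $2l$. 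I would carry out that pairing carefully and double-check the $l=n$ and small-$l$ cases, since those are where a sloppy bound would fail.
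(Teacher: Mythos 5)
Your proof of the closed-form identity and of estimate (1) is sound, though it takes a different route than the paper. For the identity you multiply through by $2^n$ and sort the $n$ odd factors $2l-1,\dots,2l-2n+1$ into the $l$ positive ones and the $n-l$ negative ones, then cancel; the paper instead proves the recursion $q_n(l)=q_n(l-1)\frac{(2n-2l+1)^2}{(2l-1)^2}$ and unwinds it by induction. Both work (your counts ``$2n$ consecutive odd numbers'' and ``remaining $n$'' should read ``$n$'' and ``$n-l$,'' but the argument is fine). For (1) your factor-by-factor comparison $2n-2l+2k-1\ge 2k-1$ for $k=1,\dots,l$ is actually cleaner than the paper's observation that $q_n$ is unimodal with $q_n(0)=q_n(n)=1$.

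Estimates (2) and (3), however, are not actually proved in your proposal. For (2), you describe ``controlling the discrepancy'' and ``a single surviving factor bounded by $2l$'' without ever producing the inequality; the paper's argument is a concrete two-step comparison you never carry out: insert the missing even integers in the numerator, $\bigl((2n-2l+1)\cdots(2n-1)\bigr)^2\le(2n-2l+1)(2n-2l+2)\cdots(2n)=\frac{(2n)!}{(2n-2l)!}$ (using $(2k-1)^2\le(2k-1)(2k)$ termwise), and bound the denominator below, $\bigl(1\cdot3\cdots(2l-1)\bigr)^2\ge(2l-1)!$ (using $(2k-1)^2\ge(2k-2)(2k-1)$), giving $q_n(l)\le\frac{(2n)!}{(2n-2l)!(2l-1)!}=2l\binom{2n}{2l}$. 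Your proposal gestures at this but does not pin it down. For (3) the situation is worse: the invocation of ``monotonicity of binomial coefficients up to the middle'' is a placeholder, and in fact the stated inequality $q_n(j)/q_n(l)\le\binom{2n}{2l}$ is false. Take $n=2$, $l=2$, $j=1$: $q_2(1)=9$, $q_2(2)=1$, so the ratio is $9$, while $\binom{4}{4}=1$. (The paper's own proof actually derives the different bound $\binom{2l}{2j}$, suggesting the statement contains a typo; but even $\binom{2l}{2j}=\binom{4}{2}=6$ fails on the same example, because the paper's intermediate claim $\binom{2n-2j-1}{2n-2l-1}\ge1$ degenerates to $\binom{\,\cdot\,}{-1}=0$ at $l=n$.) So there is no way to close (3) as written, and any correct argument must either exclude $l=n$ or weaken the bound, neither of which your sketch addresses.
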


\begin{lem}\label{lem3}
For a nonnegative integer $j$, define the following sum:
\begin{equation*}
J_n(j) = \sum_{k\ge0}\frac{\left(k+j+\frac{1}{2}\right)^2\cdots\left(k+j+n-\frac{1}{2}\right)^2e^{-\gamma k}}{\left(j+\frac{1}{2}\right)^2\cdots\left(j+n-\frac{1}{2}\right)^2}\,.
\end{equation*}
Then for $n\ge0$
\begin{equation*}
J_n(j)\le \frac{2n+1}{\left(1-e^{-\frac{\gamma}{2}}\right)^{2n+1}}\,.
\end{equation*}
\end{lem}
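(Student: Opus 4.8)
The plan is to bound the general term of $J_n(j)$, uniformly in $j$, by $(2n+1)$ times a binomial coefficient whose generating function is precisely $(1-e^{-\gamma/2})^{-(2n+1)}$. Put $b_m=2j+2m-1$ for $m=1,\dots,n$, so that the $k$-th summand of $J_n(j)$ equals $e^{-\gamma k}\prod_{m=1}^{n}(2k+b_m)^2/b_m^{2}$. First I would prove the elementary inequality
\begin{equation*}
\prod_{m=1}^{n}\frac{(2k+b_m)^{2}}{b_m^{2}}\ \le\ (2n+1)\binom{2k+2n}{2n}=(2n+1)\prod_{i=1}^{2n}\frac{2k+i}{i}\qquad(j\ge0,\ k\ge0).
\end{equation*}
Granting this, the lemma is immediate: with $x=e^{-\gamma/2}\in(0,1)$ one has $e^{-\gamma k}=x^{2k}$, and deleting the odd-index terms from the negative binomial series $\sum_{N\ge0}\binom{N+2n}{2n}x^{N}=(1-x)^{-(2n+1)}$ gives $\sum_{k\ge0}\binom{2k+2n}{2n}x^{2k}\le(1-x)^{-(2n+1)}$; multiplying by $2n+1$ produces exactly the claimed bound.

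To prove the displayed inequality, set
\begin{equation*}
g(j,k):=\frac{\prod_{m=1}^{n}(1+2k/b_m)^{2}}{\prod_{i=1}^{2n}(1+2k/i)},
\end{equation*}
so the inequality reads simply $g(j,k)\le 2n+1$. Since each factor $(1+2k/b_m)^{2}$ is a decreasing function of $b_m$, and $b_m$ increases with $j$ while the denominator is independent of $j$, the function $g(j,k)$ is non-increasing in $j$; hence it suffices to treat $j=0$, i.e. $b_m=2m-1$. Splitting the index set $\{1,\dots,2n\}$ of the denominator into its odd and even members, $\prod_{i=1}^{2n}(1+2k/i)=\prod_{m=1}^{n}(1+2k/(2m-1))\cdot\prod_{m=1}^{n}(1+2k/(2m))$, the odd part cancels one copy of the numerator and leaves
\begin{equation*}
g(0,k)=\prod_{m=1}^{n}\frac{1+2k/(2m-1)}{1+2k/(2m)}=\prod_{m=1}^{n}\frac{2m}{2m-1}\cdot\frac{2m-1+2k}{2m+2k}.
\end{equation*}
Each factor is increasing in $k$ because $\frac{2m-1+2k}{2m+2k}=1-\frac{1}{2m+2k}$ is, so $g(0,k)$ increases to its limit $\prod_{m=1}^{n}\frac{2m}{2m-1}=\frac{4^{n}(n!)^{2}}{(2n)!}=\frac{4^{n}}{\binom{2n}{n}}$. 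Finally $\binom{2n}{n}\ge \frac{4^{n}}{2n+1}$, since $\binom{2n}{n}$ is the largest of the $2n+1$ terms in $(1+1)^{2n}=4^{n}$; therefore $g(j,k)\le g(0,k)<\frac{4^{n}}{\binom{2n}{n}}\le 2n+1$, as needed.

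The one point that requires care is obtaining the constant $2n+1$ and not something much worse. A crude estimate — bounding each $(2k+b_m)/b_m$ by, say, $2(k+1)$ — reduces $J_n(j)$ to $4\sum_{k\ge0}(k+1)^{2n}e^{-\gamma k}$, which for small $\gamma$ is of size $(2n)!\,\gamma^{-(2n+1)}$ and hence far too large; the cancellation of the odd part against $\prod_{i=1}^{2n}(1+2k/i)$, leaving only the central binomial coefficient $\binom{2n}{n}$ in the limit, is exactly what keeps the constant linear in $n$. Everything else — monotonicity in $j$ and in $k$, and the generating-function estimate — is routine, and the degenerate case $n=0$ is consistent since $J_0(j)=\sum_{k\ge0}e^{-\gamma k}=(1-e^{-\gamma})^{-1}\le(1-e^{-\gamma/2})^{-1}$.
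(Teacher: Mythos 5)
Your proof is correct and takes a genuinely different route from the paper's. The paper first overestimates the summand in two steps (inserting even factors to turn the squared odd products into factorial products, then doubling the number of summands), reducing the problem to bounding a sum $I_{2n}(j)=\sum_{k\ge0}\binom{k+2j+2n}{2n}e^{-\gamma k/2}$; that is handled by a separate lemma via a recurrence relation $(1-e^{-\gamma/2})I_m(j)-I_{m-1}(j)=\binom{2j+m-1}{2j-1}$ plus parallel summation, and the factor $2n+1$ appears at the very end as $\frac{(2j+2)(2j+4)\cdots(2j+2n)}{(2j+1)(2j+3)\cdots(2j+2n-1)}\le\frac{2j+2n}{2j+1}\le 2n+1$. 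You instead establish the \emph{pointwise} bound $\prod_{m=1}^n\frac{(2k+b_m)^2}{b_m^2}\le(2n+1)\binom{2k+2n}{2n}$, uniformly in $j$ and $k$, by two monotonicity reductions (down to $j=0$, then to the $k\to\infty$ limit) and the Wallis-type identity $\prod_{m=1}^n\frac{2m}{2m-1}=\frac{4^n}{\binom{2n}{n}}$, after which the negative binomial series $\sum_N\binom{N+2n}{2n}x^N=(1-x)^{-(2n+1)}$ finishes the job with no auxiliary lemma. Your approach makes the origin of the constant $2n+1$ completely transparent (it is exactly $4^n\le(2n+1)\binom{2n}{n}$), and eliminates the need for the recursion lemma; the paper's route keeps the estimates local and term-by-term without ever needing monotonicity in $j$. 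One cosmetic remark: the chain $g(0,k)<\frac{4^n}{\binom{2n}{n}}$ is an equality rather than a strict inequality when $n=0$ (both sides equal $1$), but you dispose of $n=0$ separately, and in any case only $\le$ is needed.
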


The following is the main technical result of the paper.

\begin{theo}\label{compact_inverse}
Suppose that $\gamma>a>b$ and that
\begin{equation*}
\textrm{exp}\left(-\frac{(a-b)}{2}\right) + \textrm{exp}\left(-\frac{\gamma + a}{2}\right)<1\,.
\end{equation*}
Then $Q_n:\ell_{w'}^2\to\ell_w^2$ is a Hilbert-Schmidt operator for every $n\in\Z$ and $\|Q_n\|_{\textrm{HS}}\to0$ as $n\to\pm\infty$.  Consequently, $Q=\sum_n Q_n$ is the inverse of $\bar D$ and is a compact operator.
\end{theo}

\begin{proof}
Notice that formula for $Q_n$ shows that it is an integral operator.  Therefore, by direct calculation we can compute the Hilbert-Schmidt norms:
\begin{equation*}
\|Q_n\|_{\textrm{HS}}^2=\left\{
\begin{aligned}
&\sum_{j=l}^\infty\frac{\prod_{k=l}^{l+n-1}|\beta(k)|^2}{\prod_{k=j}^{j+n}|\beta(k)|^2}\cdot\frac{|\mu(j)|^2}{|\mu(l)|^2}\cdot\frac{w(l)}{w'(j)} &&n\ge0
 \\
&\sum_{j=l}^\infty\frac{\prod_{k=j+n+1}^{j-1}|\beta(k)|^2}{\prod_{k=l+n}^{l-1}|\beta(k)|^2}\cdot\frac{|\mu(j)|^2}{|\mu(l)|^2}\cdot\frac{w(l)}{w'(j)}  &&n<0
\end{aligned}\right.
\end{equation*}

 Since the choice of $\beta$'s are linear, the following ratios:
\begin{equation*}
\left|\frac{\beta(l)\cdots\beta(l+n-1)}{\beta(l+n)\cdots\beta(l-1)}\right|^2\textrm{ and }\left|\frac{\beta(j+n+1)\cdots\beta(j-1)}{\beta(l+n)\cdots\beta(l-1)}\right|^2
\end{equation*}
are ratios of polynomials.  Thus by our choice of exponential $\mu$'s, $w$'s and $w'$'s, $\|Q_n\|_{\textrm{HS}}$ exists if and only if
\begin{equation*}
\sum_{j\ge l}\left|\frac{\mu(j)}{\mu(l)}\right|^2\cdot\frac{w(l)}{w'(j)}<\infty\,.
\end{equation*}
However, this easily follows if $\gamma>a>b$.  Thus the Hilbert-Schmidt norm of $Q_n$ exists for all $n\in\Z$.  It remains to show that those norms go to zero as $n\to\pm\infty$.  We only need to study the case for $n\ge0$ as if $n<0$ then by doing a change of variables of $j\mapsto -l$, $l\mapsto -j$ and $n\mapsto -n-1$ we are back in the $n\ge0$ case. 
Thus, we only need to estimate the sum:
\begin{equation*}
\|Q_n\|_{\textrm{HS}}^2 = \sum_{j\ge l}\frac{\left(l+\frac{1}{2}\right)^2\cdots\left(l+n-\frac{1}{2}\right)^2}{\left(j+\frac{1}{2}\right)^2\cdots\left(j+n-\frac{1}{2}\right)^2}\cdot\frac{e^{-\gamma(j-l)-a|l|+b|j|}}{\left(j+n+\frac{1}{2}\right)^2}\,.
\end{equation*}
The sum is over all $j\ge l$.  It splits into sums over four main regions which we will further subdivide as illustrated in the picture below:

\begin{figure}[ht]
\begin{center}
\includegraphics[width=65truemm]{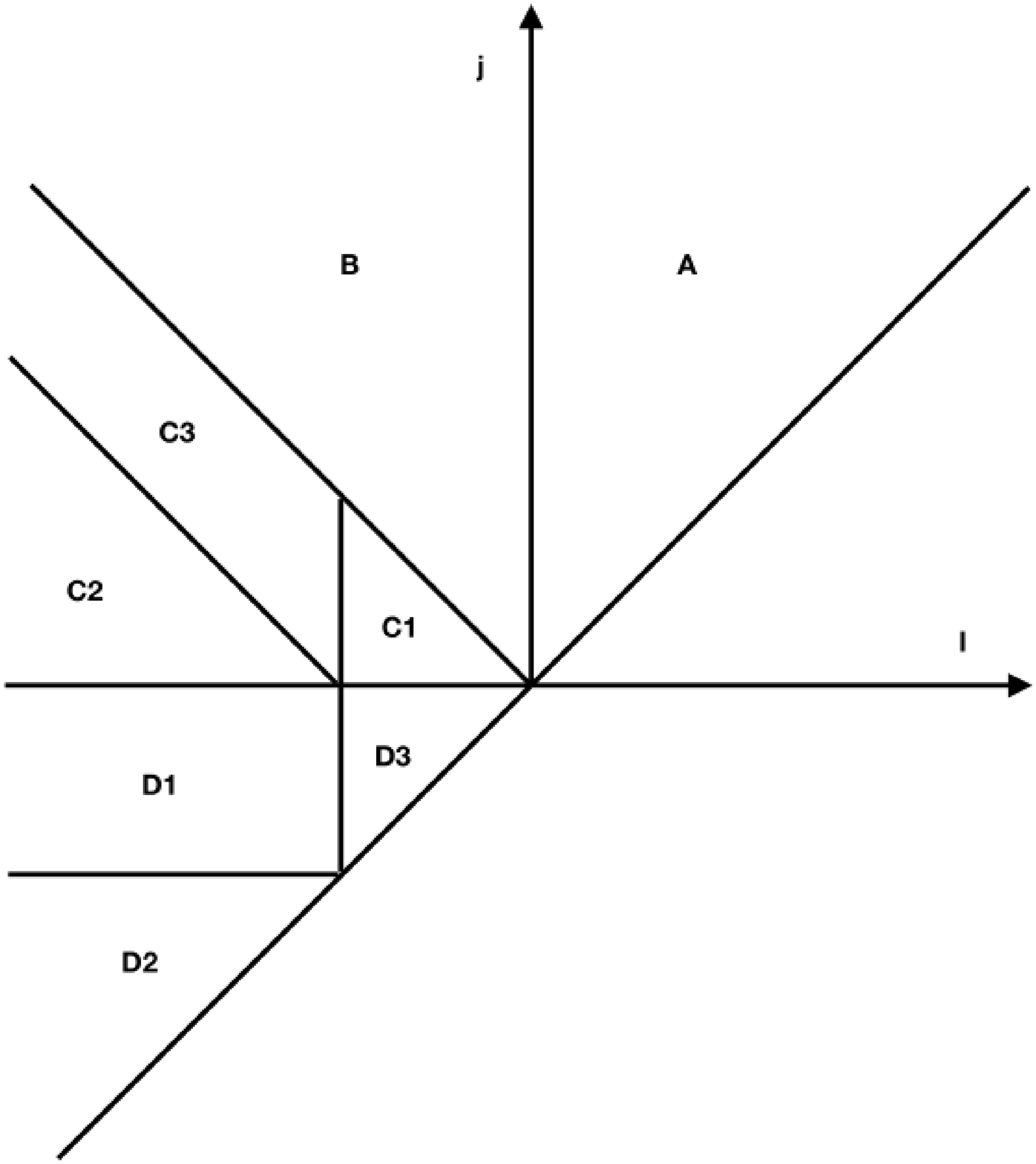}
\end{center}
\end{figure}

We have
\begin{equation*}
\|Q_n\|_{\textrm{HS}}^2 \leq S_A+S_B+S_C+S_D,
\end{equation*}
as for simplicity of estimations we let the regions overlap.
Here the regions are: region A: $j\ge l \ge0$, region B: $j\ge-l\geq0$, $l\le0$, region C: $-l\ge j\ge0$, $l\le0$, and region D: $l\ge j\ge0$, which we will handle separately.  In our estimates, we double count the boundaries in some places for convenience of different estimates.

{\bf Regions A and B:}   Notice the following observation: if $-j\le l\le j$ for $j\ge0$, then for any $r>0$ we have that $(l+r)^2\le (j+r)^2$.  Using this fact, 
we can estimate:
\begin{equation*}
\frac{\left(l+\frac{1}{2}\right)^2\cdots\left(l+n-\frac{1}{2}\right)^2}{\left(j+\frac{1}{2}\right)^2\cdots\left(j+n-\frac{1}{2}\right)^2}\le1.
\end{equation*}
It follows that we have
\begin{equation*}
S_A\le \sum_{j\ge l\ge0}\frac{e^{-\gamma(j-l)-al+bj}}{\left(j+n+\frac{1}{2}\right)^2}\to0\textrm{ as }n\to\infty\,.
\end{equation*}
Changing $l\to -l$ we get exactly the same estimate for $S_B$, and so $S_B\to0$ as $n\to\infty$.

{\bf Region C:} $0\le j\le -l$.  

First we map $l\mapsto -l$, then we split this region into two sub-regions: C1: $0\le j\le l\le n$ and the complement of C1: $0\le j\le l$, $l\ge n$.  In the second region we make a change of variables of $l'=l-n$, then replace $l'$ with $l$ and we further get two additional sub-regions: C2: $0\le j\le l$ and C3: $0\le l\le j\le l+n$. 

First in sub-region C1: since $l\le n$, using the first inequality from Lemma \ref{lem1}, we have
\begin{equation*}
\begin{aligned}
S_{C1} &=\sum_{0\le j\le l\le n}\frac{\left(l-\frac{1}{2}\right)^2\cdots\left(l-n+\frac{1}{2}\right)^2}{\left(j+\frac{1}{2}\right)^2\cdots\left(j+n-\frac{1}{2}\right)^2}\cdot\frac{e^{-\gamma(j+l)-al+bj}}{\left(j+n+\frac{1}{2}\right)^2} \\
&\le\sum_{0\le j\le l\le n}\frac{\left(l-\frac{1}{2}\right)^2\cdots\left(l-n+\frac{1}{2}\right)^2}{\left(\frac{1}{2}\right)^2\cdots\left(n-\frac{1}{2}\right)^2}\cdot\frac{e^{-\gamma(j+l)-al+bj}}{\left(j+n+\frac{1}{2}\right)^2} \\
&\le\sum_{0\le j\le l\le n}\frac{e^{-\gamma(j+l)-al+bj}}{\left(j+n+\frac{1}{2}\right)^2}\le\frac{1}{\left(n+\frac{1}{2}\right)^2}\sum_{0\le j\le l<\infty}e^{(-\gamma +b)j-(\gamma+a)l} <\frac{\textrm{const}}{\left(n+\frac{1}{2}\right)^2}
\end{aligned}
\end{equation*}
which clearly goes to zero as $n\to\infty$.

In the case of C2, we get
\begin{equation*}
S_{C2} = \sum_{0\le j\le l}\frac{\left(l+\frac{1}{2}\right)^2\cdots\left(l+n-\frac{1}{2}\right)^2}{\left(j+\frac{1}{2}\right)^2\cdots\left(j+n-\frac{1}{2}\right)^2}\cdot\frac{e^{-(\gamma +a)n+(-\gamma +b)j - (\gamma +a)l}}{\left(j+n+\frac{1}{2}\right)^2}\,.
\end{equation*}
Letting $l=k+j$, the sum becomes
\begin{equation*}
S_{C2} = \sum_{j,k\ge0}\frac{\left(k+j+\frac{1}{2}\right)^2\cdots\left(k+j+n-\frac{1}{2}\right)^2}{\left(j+\frac{1}{2}\right)^2\cdots\left(j+n-\frac{1}{2}\right)^2}\cdot\frac{e^{-(\gamma +a)n+(-\gamma +b)j - (\gamma +a)(k+j)}}{\left(j+n+\frac{1}{2}\right)^2}\,.
\end{equation*}
Implementing Lemma \ref{lem3} we have
\begin{equation*}
S_{C2} \le \frac{(2n+1)e^{-(\gamma+a)n}}{\left(1 - e^{-\frac{\gamma +a}{2}}\right)^{2n+1}}\to0 \textrm{ as }n\to\infty\,.
\end{equation*}

In sub-region C3 we have:
\begin{equation*}
S_{C3} = \sum_{0\le l\le j\le l+n} \frac{\left(l+\frac{1}{2}\right)^2\cdots\left(l+n-\frac{1}{2}\right)^2}{\left(j+\frac{1}{2}\right)^2\cdots\left(j+n-\frac{1}{2}\right)^2}\cdot\frac{e^{-(\gamma-b)j-(\gamma+a)(l+n)}}{\left(j+n+\frac{1}{2}\right)^2}
\end{equation*}
Notice that in this region we again have that
\begin{equation*}
\frac{\left(l+\frac{1}{2}\right)^2\cdots\left(l+n-\frac{1}{2}\right)^2}{\left(j+\frac{1}{2}\right)^2\cdots\left(j+n-\frac{1}{2}\right)^2}\le1.
\end{equation*}
Thus, by overestimating, we have that
\begin{equation*}
S_{C3}\le \sum_{0\le l\le j\le\infty} \frac{e^{-(\gamma-b)j-(\gamma+a)(l+n)}}{\left(j+n+\frac{1}{2}\right)^2} \le \frac{1}{\left(n+\frac{1}{2}\right)^2}\sum_{0\le l\le j\le\infty} e^{-(\gamma-b)j-(\gamma+a)(l+n)}
\end{equation*}
which goes to zero as $n\to\infty$.

{\bf Region D:} $l\le j\le0$.  

First we map $j\mapsto -j$ and $l\mapsto -l$ and the sum becomes
\begin{equation*}
S_{D} = \sum_{0\le j\le l}\frac{\left(l-\frac{1}{2}\right)^2\cdots\left(l-n+\frac{1}{2}\right)^2}{\left(j-\frac{1}{2}\right)^2\cdots\left(j-n+\frac{1}{2}\right)^2}\cdot\frac{e^{\gamma j - \gamma l -a|l| + b|j|}}{\left(j-n-\frac{1}{2}\right)^2}\,.
\end{equation*}
Like with region C, we split this into three sub-regions: D1: $0\le j\le n\le l$, D2: $n\le j\le l$, and D3: $0\le j\le l\le n$.

In sub-region D1, we map $l\mapsto l+n$ which yields
\begin{equation*}
S_{D1} = \sum_{0\le j\le n,l\ge0}\frac{\left(l+\frac{1}{2}\right)^2\cdots\left(l+n-\frac{1}{2}\right)^2}{\left(j-\frac{1}{2}\right)^2\cdots\left(j-n+\frac{1}{2}\right)^2}\cdot\frac{e^{-(\gamma + a)n}e^{(\gamma +b)j - (\gamma +a)l}}{\left(j-n-\frac{1}{2}\right)^2}\,.
\end{equation*}
Multiplying and dividing the sum by $(\frac{1}{2})^2\cdots(n-\frac{1}{2})^2$ and using Lemma \ref{lem3} we get
\begin{equation*}
S_{D1} \le \frac{(2n+1)e^{-(\gamma+a)n}}{\left(1 - e^{-\frac{\gamma+a}{2}}\right)^{2n+1}}\sum_{0\le j\le n}\frac{\left(\frac{1}{2}\right)^2\cdots\left(n-\frac{1}{2}\right)^2}{\left(j-\frac{1}{2}\right)^2\cdots\left(j-n+\frac{1}{2}\right)^2}\cdot\frac{e^{(\gamma+b)j}}{\left(j-n-\frac{1}{2}\right)^2}\,.
\end{equation*}
The second inequality in Lemma \ref{lem1} implies that
\begin{equation*}
\begin{aligned}
S_{D1} &\le \frac{(2n+1)e^{-(\gamma+a)n}}{\left(1 - e^{-\frac{\gamma+a}{2}}\right)^{2n+1}}\sum_{0\le j\le n}\left(
\begin{array}{c}
2n \\2j
\end{array}\right)\frac{2je^{(\gamma+b)j}}{\left(j-n-\frac{1}{2}\right)^2} \\
&\le \frac{(2n+1)e^{-(\gamma+a)n}}{\left(1 - e^{-\frac{\gamma+a}{2}}\right)^{2n+1}}\sum_{0\le j\le n}\left(
\begin{array}{c}
2n \\j
\end{array}\right)e^{\left(\frac{\gamma+b}{2}\right)j} \\
&\le n(2n+1)\left(\frac{e^{-\frac{\gamma+a}{2}}+e^{-\frac{(a-b)}{2}}}{1-e^{-\frac{\gamma+a}{2}}}\right)^{2n}\,.
\end{aligned}
\end{equation*}
The conditions on $a$, $b$ and $\gamma$ imply that the right hand side of the above inequality goes to $0$ as $n\to\infty$ and thus $S_{D1}\to0$ as $n\to\infty$.

In sub-region D2, we map $l\mapsto l+n$ and $j\mapsto j+n$ to get
\begin{equation*}
S_{D2}=\sum_{0\le j\le l}\frac{\left(l+\frac{1}{2}\right)^2\cdots\left(l+n-\frac{1}{2}\right)^2}{\left(j+\frac{1}{2}\right)^2\cdots\left(j+n-\frac{1}{2}\right)^2}\cdot\frac{e^{-(\gamma + a)(l+n) +(\gamma+b)(j+n)}}{\left(j-\frac{1}{2}\right)^2}\,.
\end{equation*}
Writing $l=k+j$ and using Lemma \ref{lem3} we obtain:
\begin{equation*}
\begin{aligned}
S_{D2}&=e^{-(a-b)n}\sum_{0\le j,k}\frac{\left(j+k+\frac{1}{2}\right)^2\cdots\left(j+k+n-\frac{1}{2}\right)^2}{\left(j+\frac{1}{2}\right)^2\cdots\left(j+n-\frac{1}{2}\right)^2}\cdot\frac{e^{-(a-b)j -(\gamma+a)k}}{\left(j-\frac{1}{2}\right)^2}\\
&\le \frac{(2n+1)e^{-(a-b)n}}{\left(1 - e^{-\frac{\gamma+a}{2}}\right)^{2n+1}}\sum_{0\le j}\frac{e^{-(a-b)j}}{\left(j-\frac{1}{2}\right)^2}<\infty\,.
\end{aligned}
\end{equation*}
Like in sub-region D1, the conditions on $a$, $b$ and $\gamma$ imply the last term in the above inequality goes to zero as $n\to\infty$ and hence $S_{D2}$ goes to zero as $n\to\infty$.

Finally in the sub-region D3, by multiplying and dividing by $(\frac{1}{2})^2\cdots(n-\frac{1}{2})^2$, we have that:
\begin{equation*}
S_{D3}=\sum_{0\le j\le l\le n}\frac{q_n(j)}{q_n(l)}\cdot\frac{e^{-(\gamma+a)l+(\gamma+b)j}}{\left(j-n-\frac{1}{2}\right)^2}\,.
\end{equation*}

Observe in this region we have that $n+\frac{1}{2}-j>n+\frac{1}{3}-j$.  Using this observation and the third inequality in Lemma \ref{lem1} we have
\begin{equation*}
\begin{aligned}
S_{D3} &\le\sum_{0\le j\le l}\left(
\begin{array}{c}
2l\\2j
\end{array}\right)\cdot\frac{e^{-(\frac{\gamma+a}{2})2l+(\frac{\gamma+b}{2})2j}}{\left(\frac{1}{2}\cdot 2j-n-\frac{1}{3}\right)^2}\le \sum_{0\le j\le l\le 2n}\left(
\begin{array}{c}
l\\j
\end{array}\right)\cdot\frac{e^{-(\frac{\gamma+a}{2})l+(\frac{\gamma+b}{2})j}}{\left(\frac{j}{2}-n-\frac{1}{3}\right)^2}\\
&=\sum_{l\ge0}\left(\sum_{j=0}^l\left(
\begin{array}{c}
l\\j
\end{array}\right)\frac{e^{(\frac{\gamma+b}{2})j}}{\left(\frac{j}{2}-n-\frac{1}{3}\right)^2}\right)e^{-(\frac{\gamma+a}{2})l}
=\sum_{l\ge0}\frac{1}{\left(\frac{l}{2}-n-\frac{1}{3}\right)^2}\left(e^{-(\frac{\gamma+a}{2})}+e^{-(\frac{a-b}{2})}\right)^l
\end{aligned}
\end{equation*}
where the last sum is finite.  Thus by the Lebesgue Dominated Convergence Theorem, $S_{D2}\to0$ as $n\to\infty$. 
This completes the proof.
\end{proof}

\noindent {\bf Remark.}
Since a bounded perturbation of an operator with compact parametrices also has compact parametrices, see Appendix of \cite{KMRSW}, it follows for example that,
if $\beta(l)=\beta_\infty l+\tilde\beta(l)$ and $\alpha(l)= e^{-\gamma /2}\beta_\infty l+\tilde\alpha(l)$, where $\beta_\infty$ is a nonzero constant, $\tilde\alpha(l)$ and $\tilde\beta(l)$ are bounded, then the corresponding operator $D$ has compact parametrices. This observation substantially increases the class of covariant implementations with compact parametrices.

\subsection{Proofs of Lemmas}

\begin{proof} (of Lemma \ref{lem1})
Notice that
\begin{equation*}
q_n(1) = \frac{\left(\frac{1}{2}\right)^2\left(\frac{3}{2}\right)^2\cdots\left(n-\frac{3}{2}\right)^2\left(n-\frac{1}{2}\right)^2}{\left(\frac{1}{2}\right)^2\left(\frac{1}{2}\right)^2\left(\frac{3}{2}\right)^2\cdots\left(n-\frac{5}{2}\right)^2\left(n-\frac{3}{2}\right)^2} = \frac{(2n-1)^2}{1^2}\,.
\end{equation*}
Similarly
\begin{equation*}
q_n(2) = \frac{(2n-3)^2(2n-1)^2}{1^2\cdot 3^2}\,.
\end{equation*}
It follows by induction that
\begin{equation}\label{ind_formula}
q_n(l) = q_n(l-1)\frac{(2n-2l+1)^2}{(2l-1)^2} =\frac{((2n-2l+1)\cdots(2n-3)(2n-1))^2}{(1\cdot 3\cdot5\cdots (2l-1))^2}\,.
\end{equation}
For the first inequality, notice that from the inductive formula \eqref{ind_formula} we see that $q_n(l)$ as a function of $l$, $0\le l\le n$, first increases and then decreases, so the minimum of it occurs at the endpoints.  This means that $q_n(l)\ge q_n(0)=q_n(n)=1$, yielding the first inequality.  

To prove the second inequality notice that
\begin{equation*}
\begin{aligned}
q_n(l) &= \frac{((2n-2l+1)\cdots(2n-3)(2n-1))^2}{(1\cdot 3\cdot5\cdots (2l-1))^2} \\
&\le \frac{(2n-2l+1)(2n-2l+2)\cdots(2n-1)(2n)}{1\cdot2\cdot3\cdots(2l-2)(2l-1)} = \frac{(2n)!}{(2n-1)!(2n-2l)!}\\
&=\frac{2l(2n)!}{(2l)!(2n-2l)!} = 2l\left(
\begin{array}{c}
2n\\2l
\end{array}\right)\,.
\end{aligned}
\end{equation*}
To prove the final inequality we estimate as follows:
\begin{equation*}
\begin{aligned}
\frac{q_n(j)}{q_n(l)}&=\frac{\left(l-\frac{1}{2}\right)^2\cdots\left(l-n+\frac{1}{2}\right)^2}{\left(j-\frac{1}{2}\right)^2\cdots\left(j-n+\frac{1}{2}\right)^2} = \frac{(2j+1)^2\cdots(2l-1)^2}{(2n-2l+1)^2\cdots(2n-2j+1)^2}\\
&\le\frac{(2j+1)(2j+2)\cdots(2l-1)(2l)}{(2n-2l)(2n-2l+1)\cdots(2n-2j+1)} = \frac{(2l)!}{(2j)!}\cdot\frac{(2n-2l-1)!}{(2n-2j-1)!}\\
&=\frac{\left(
\begin{array}{c}
2l\\2j
\end{array}\right)}{\left(
\begin{array}{c}
2n-2j-1\\2n-2l-1
\end{array}\right)}\le \left(
\begin{array}{c}
2l\\2j
\end{array}\right)\,.
\end{aligned}
\end{equation*}
\end{proof}

To prove Lemma \ref{lem3}, we need the following additional step.

\begin{lem}\label{lem2}
For a nonnegative integer $j$, define the following sum:
\begin{equation*}
I_m(j) = \sum_{k\ge0}\frac{(k+2j+1)\cdots(k+2j+m)e^{-\frac{\gamma}{2}k}}{m!}\textrm{ for }m>0\textrm{ and }I_0 = \left(1 - e^{-\frac{\gamma}{2}}\right)^{-1}\,.
\end{equation*}
Then, for $m\ge 0$, we have:
\begin{equation*}
I_m(j)\le \frac{1}{\left(1 - e^{-\frac{\gamma}{2}}\right)^{m+1}}\cdot\frac{(2j+1)\cdots(2j+m)}{m!}\,.
\end{equation*}
\end{lem}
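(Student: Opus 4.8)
The plan is to reduce the estimate to the elementary power series $\sum_{k\ge0}\binom{k+m}{m}q^{k}=(1-q)^{-(m+1)}$. Write $q=e^{-\gamma/2}$, so that $0<q<1$ and, for $m\ge 1$,
\[
I_m(j)=\frac{1}{m!}\sum_{k\ge0}(k+2j+1)(k+2j+2)\cdots(k+2j+m)\,q^{k}.
\]
First I would establish the pointwise bound
\[
(k+2j+1)\cdots(k+2j+m)\le \frac{(2j+1)\cdots(2j+m)}{m!}\,(k+1)(k+2)\cdots(k+m)\qquad(k\ge0),
\]
by writing the left side as $(k+1)\cdots(k+m)\prod_{i=1}^m\bigl(1+\tfrac{2j}{k+i}\bigr)$ and noting that each factor $1+\tfrac{2j}{k+i}$ is positive and nonincreasing in $k$; hence the product is nonincreasing in $k$ and is therefore at most its value at $k=0$, which is $\prod_{i=1}^m\tfrac{2j+i}{i}=\tfrac{(2j+1)\cdots(2j+m)}{m!}$.

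Then I would sum against $q^{k}$ and use the identity $\sum_{k\ge0}(k+1)(k+2)\cdots(k+m)\,q^{k}=\dfrac{m!}{(1-q)^{m+1}}$ (the $m$-th derivative of $\sum_k q^{k}=(1-q)^{-1}$, after reindexing), which immediately yields
\[
I_m(j)\le\frac{1}{m!}\cdot\frac{(2j+1)\cdots(2j+m)}{m!}\cdot\frac{m!}{(1-q)^{m+1}}=\frac{(2j+1)\cdots(2j+m)}{m!\,(1-q)^{m+1}},
\]
as required. The case $m=0$ is immediate from the definition: $I_0=\sum_{k\ge0}q^{k}=(1-q)^{-1}$, which is the right-hand side with the empty product read as $1$.

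There is essentially no obstacle; the only step meriting a line of care is the monotonicity in $k$ of $\prod_{i=1}^m(1+2j/(k+i))$, which locates the maximum at $k=0$. Should one wish to avoid invoking the power-series identity, the same bound can be proved by induction on $m$ from an Abel-summation recursion: with $a_k=(k+2j+1)\cdots(k+2j+m)$ one has $a_k-a_{k-1}=m\,(k+2j+1)\cdots(k+2j+m-1)$, which after telescoping $(1-q)\sum_k a_kq^{k}$ gives $I_m(j)=\frac{1}{1-q}\bigl(\frac{2j(2j+1)\cdots(2j+m-1)}{m!}+I_{m-1}(j)\bigr)$; the inductive step then collapses to the trivial inequality $2j\le 2j/(1-q)^m$.
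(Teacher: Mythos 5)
Your main argument is correct and is genuinely different from the paper's. You prove the pointwise bound
\[
(k+2j+1)\cdots(k+2j+m)\le \frac{(2j+1)\cdots(2j+m)}{m!}\,(k+1)\cdots(k+m)
\]
by writing the left side as $(k+1)\cdots(k+m)\prod_{i=1}^m\bigl(1+\tfrac{2j}{k+i}\bigr)$ and maximizing the (monotone in $k$) product at $k=0$, and then you close the estimate with the power-series identity $\sum_{k\ge0}\binom{k+m}{m}q^k=(1-q)^{-(m+1)}$; this is a one-line reduction to a standard generating function. The paper proceeds differently: it derives the telescoping recurrence $(1-e^{-\gamma/2})I_m(j)-I_{m-1}(j)=\binom{2j+m-1}{2j-1}$ (which you also record in your closing remark), solves it \emph{explicitly} to get
\[
I_m(j)=\frac{1}{(1-e^{-\gamma/2})^{m+1}}\sum_{r=0}^m\binom{2j+r-1}{r}\bigl(1-e^{-\gamma/2}\bigr)^r,
\]
and then bounds by dropping the factors $(1-e^{-\gamma/2})^r\le 1$ and applying the hockey-stick (parallel summation) identity $\sum_{r=0}^m\binom{2j+r-1}{r}=\binom{2j+m}{m}$. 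Your approach is more economical: one monotonicity observation plus the negative-binomial series, with no need to solve the recurrence. The paper's route yields a closed form for $I_m(j)$ as a by-product, which could be useful if sharper control were needed, but for the stated bound it is overkill. Your sketch of the alternative inductive route is also sound: starting from the same recurrence, the induction step reduces to $2j(1-q)^m\le 2j$, which is the inequality you state (after dividing through by $(1-q)^m$). Both routes give the exact same constant, so there is no loss in your more direct argument.
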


\begin{proof}
Notice $I_m(j)$ satisfies the following reduction formula:
\begin{equation*}
\begin{aligned}
I_m(j) &= \frac{(2j+1)\cdots(2j+m)}{m!} + e^{-\frac{\gamma}{2}}I_m(j) + \sum_{k\ge1}\frac{(k+2j+1)\cdots(k+2j+m-1)e^{-\frac{\gamma}{2}k}}{(m-1)!} \\
&=\frac{(2j)(2j+1)\cdots(2j+m-1)}{m!} + e^{-\frac{\gamma}{2}}I_m(j) + I_{m-1}(j)\,.
\end{aligned}
\end{equation*}
This implies that $I_m(j)$ satisfies the following recurrence relation:
\begin{equation*}
\left(1-e^{-\frac{\gamma}{2}}\right)I_m(j) - I_{m-1}(j) = \left(
\begin{array}{c}
2j+m-1\\ 2j-1
\end{array}\right)\,.
\end{equation*}
This can be solved recursively which yields
\begin{equation*}
I_m(j) = \frac{1}{\left(1-e^{-\frac{\gamma}{2}}\right)^{m+1}}\sum_{r=0}^m\left(
\begin{array}{c}
2j+r-1\\ r
\end{array}\right)\left(1-e^{-\frac{\gamma}{2}}\right)^r\,.
\end{equation*}
Since $1-e^{-\gamma/2}\le1$ we get
\begin{equation*}
I_m(j) \le \frac{1}{\left(1-e^{-\frac{\gamma}{2}}\right)^{m+1}}\sum_{r=0}^m\left(
\begin{array}{c}
2j+r-1\\ r
\end{array}\right)\,.
\end{equation*}
By parallel summation we have
\begin{equation*}
\sum_{r=0}^m\left(
\begin{array}{c}
2j+r-1\\ r
\end{array}\right) = \left(
\begin{array}{c}
2j+m\\m
\end{array}\right) = \frac{(2j+1)\cdots(2j+m)}{m!}
\end{equation*}
and thus the result follows.
\end{proof}

\begin{proof} (of Lemma \ref{lem3})
Observe the following inequality
\begin{equation*}
\begin{aligned}
J_n(j)&=\sum_{k\ge0}\frac{(2k+2j+1)^2\cdots(2k+2j+2n-1)^2}{(2j+1)^2\cdots(2j+2n-1)^2}e^{-\frac{\gamma}{2}\cdot 2k}\\
&\le\sum_{k\ge0}\frac{(2k+2j+1)(2k+2j+2)\cdots(2k+2j+2n)}{(2j+1)^2\cdots(2j+2n-1)^2}e^{-\frac{\gamma}{2}\cdot 2k}. \\
\end{aligned}
\end{equation*}
Overestimating by adding odd $2k+1$ terms we get:
\begin{equation*}
\begin{aligned}
J_n(j)&\le\frac{(2n)!}{(2j+1)^2\cdots(2j+2n-1)^2}\sum_{k\ge0}\frac{(k+2j+1)\cdots(k+2j+2n)}{(2n)!}e^{-\frac{\gamma}{2}k}\\
&=\frac{(2n)!}{(2j+1)^2\cdots(2j+2n-1)^2}I_{2n}(j),\\
\end{aligned}
\end{equation*}
where $I_{2n}(j)$ is defined in Lemma \ref{lem2}.  By implementing Lemma \ref{lem2} we arrive at
\begin{equation*}
\begin{aligned}
J_n(j)&\le \frac{(2n)!}{(2j+1)^2\cdots(2j+2n-1)^2}\cdot\frac{(2j+1)\cdots(2j+2n)}{(2n)!}\cdot\frac{1}{\left(1-e^{-\frac{\gamma}{2}}\right)^{2n+1}}\\
&=\frac{(2j+2)(2j+4)\cdots(2j+2n)}{(2j+1)(2j+3)\cdots(2j+2n-1)}\cdot\frac{1}{\left(1-e^{-\frac{\gamma}{2}}\right)^{2n+1}}\\
&\le \frac{2j+2n}{2j+1}\cdot\frac{1}{\left(1-e^{-\frac{\gamma}{2}}\right)^{2n+1}}\le\frac{2n+1}{\left(1-e^{-\frac{\gamma}{2}}\right)^{2n+1}}\,.
\end{aligned}
\end{equation*}
\end{proof}

\end{document}